\def\R{\mathrm{I\kern-0.21emR}}
\def\N{\mathrm{I\kern-0.21emN}}
\renewcommand{\geq}{\geqslant}
\renewcommand{\leq}{\leqslant}
\newtheorem{theorem}{Theorem}
\newtheorem{corollary}{Corollary}
\newtheorem{lemma}{Lemma}
\theoremstyle{definition}
\theoremstyle{definition}\newtheorem{remark}{Remark}
\title{Finite-dimensional predictor-based feedback stabilization of a 1D linear reaction-diffusion equation with boundary input delay}
\author{Delphine Bresch-Pietri \and Christophe Prieur\footnote{Control Systems Department, GIPSA-lab, 11 rue des Math\'ematiques, BP 46, 38402 Saint-Martin d'H\`eres,
France ({\tt Christophe.Prieur@gipsa-lab.fr}).}
\and
Emmanuel Tr\'elat\footnote{Universit\'e Pierre et Marie Curie (Univ. Paris 6) and Institut Universitaire de France and Team GECO Inria Saclay,
CNRS UMR 7598, Laboratoire Jacques-Louis Lions, F-75005, Paris, France (\texttt{emmanuel.trelat@upmc.fr}).}
}
\date{Preliminary version}
\begin{document}
\maketitle

\begin{abstract}
We consider a one-dimensional controlled reaction-diffusion equation, where the control acts on the boundary and is subject to a constant delay. Such a model is a paradigm for more general parabolic systems coupled with a transport equation. We prove that this is possible to stabilize (in $H^1$ norm) this process by means of an explicit predictor-based feedback control that is designed from a finite-dimensional subsystem. The implementation is very simple and efficient and is based on standard tools of pole-shifting.
Our feedback acts on the system as a finite-dimensional predictor.
We compare our approach with the backstepping method.

\end{abstract}

\section{Introduction and main result}\label{sec_intro}
Let $L>0$ and let $c\in L^\infty(0,L)$.
We consider the 1D heat equation on $(0,L)$ with a delayed boundary control
\begin{equation} \label{eqcont0}
\begin{split}
&y_t = y_{xx}+c(x)y,  \\
&y(t,0)=0,\ y(t,L)=u_D(t)=u(t-D),
\end{split}
\end{equation}
where the state is $y(t,\cdot):[0,L]\rightarrow\R$ and the control is $u_D(t) = u(t-D)$, with $D> 0$ a constant delay. 

Our objective is to design a feedback control stabilizing \eqref{eqcont0}.

%\medskip
%
%This problem is equivalent to the problem of stabilizing the coupled system
%\begin{equation*}
%\begin{split}
%y_t &= y_{xx}+c(x)y,  \\
%z_t &= z_x,
%\end{split}
%\end{equation*}
%with .............................................
%
%\medskip

\medskip

There have been a number of works in the literature dealing with the stabilization of processes with input delays
but only few contributions do exist for processes driven by PDE's. 
The academic problem that we investigate here has been studied in \cite{Krstic_SCL2009} with a backstepping approach. 

\medskip

To be more precise with initial conditions, we assume that we are only interested in what happens for $t\geq 0$. We consider an initial condition
$$
y(0,\cdot)=y_0(\cdot) \in L^2(0,L),
$$
and since the boundary control is retarded with the delay $D$, we assume that no control is applied within the time interval $(0,D)$. In other words, we assume that $u_D(t)=0$ for every $t\in(0,D)$.
For every $t>D$ on, a nontrivial control $u_D(t)$ can then be applied. In what follows we are going to design a feedback control whose value $u(t-D)$ only depends on the values of $X_1(s)$ with $0<s<t-D$.

Our strategy begins with a spectral analysis of the operator underlying the control system \eqref{eqcont0} which is split in two parts. The first part of the system is finite dimensional and contains all unstable modes, whereas the second part is infinite dimensional and contains all stable modes. The design of our feedback is realized on the finite-dimensional part of the system. We use the Artstein model reduction and then design a Kalman gain matrix in a standard way with the pole-shifting theorem. Then we invert the Artstein transform and end up with the desired feedback.
We stress that the feedback that we design in such a way is very easy to implement in practice. We first show that this feedback stabilizes exponentially the finite-dimensional part of the system, and then, using an appropriate Lyapunov function, we prove that it stabilizes as well the whole system. 
Note that the exponential asymptotic stability result holds true for every possible value of the delay $D\geq 0$.

\begin{theorem}\label{thm1}
The equation \eqref{eqcont0} with boundary inpu delay is exponentially stabilizable, with a feedback that is built with a finite-dimensional linear control system with input delay. More precisely, with this feedback the function $t\mapsto\Vert y(t,\cdot)\Vert_{H^1(0,L)}$ converges exponentially to $0$ as $t$ tends to $+\infty$.
\end{theorem}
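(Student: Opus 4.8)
The plan is to reduce the control design to a finite-dimensional subsystem by spectral truncation, to build the feedback on that subsystem by the Artstein model reduction followed by pole shifting, and then to control the remaining infinite-dimensional (stable) dynamics by an energy estimate; throughout, the delayed boundary input is handled via a lift. \emph{Step 1 (spectral reduction).} I would first diagonalize $A=\partial_{xx}+c$ on $(0,L)$ with domain $H^2(0,L)\cap H^1_0(0,L)$: being self-adjoint with compact resolvent, it has simple real eigenvalues $\lambda_1>\lambda_2>\cdots\to-\infty$, an $L^2(0,L)$-orthonormal eigenbasis $(e_n)_{n\ge1}$, and only finitely many $\lambda_n\ge0$. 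Fix $N$ with $\lambda_{N+1}<0$. With $y_n(t)=\langle y(t,\cdot),e_n\rangle$, integrating by parts twice (using $e_n(0)=e_n(L)=0$, $y(t,0)=0$, $y(t,L)=u(t-D)$) gives $\dot y_n=\lambda_n y_n-e_n'(L)\,u(t-D)$, so the block $X_1=(y_1,\dots,y_N)^\top$ satisfies $\dot X_1=A_1X_1+B_1u(t-D)$ with $A_1=\mathrm{diag}(\lambda_1,\dots,\lambda_N)$ and $B_1=-(e_1'(L),\dots,e_N'(L))^\top$. Since a Dirichlet Sturm--Liouville eigenfunction cannot vanish together with its derivative at $L$, all entries of $B_1$ are nonzero, and as the $\lambda_n$ are pairwise distinct, the pair $(A_1,B_1)$ is controllable.

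\emph{Step 2 (predictor feedback, finite-dimensional decay).} Following Artstein, I set $Z(t)=e^{A_1D}X_1(t)+\int_{t-D}^{t}e^{A_1(t-s)}B_1u(s)\,ds$, which satisfies the delay-free equation $\dot Z=A_1Z+B_1u(t)$. By the pole-shifting theorem I choose $K$ so that $A_1+B_1K$ has all its eigenvalues of real part $<-\mu$ for some fixed $\mu>0$, and close the loop with $u(t)=KZ(t)=Ke^{A_1D}X_1(t)+K\int_{t-D}^{t}e^{A_1(t-s)}B_1u(s)\,ds$. This law is causal — its value at time $t$ uses $X_1(t)$, which is computable from $y(t,\cdot)$, together with the past control on $[t-D,t)$ — and is the announced finite-dimensional predictor. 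With $u\equiv0$ on $[-D,0]$ one obtains $Z(t)=e^{(A_1+B_1K)t}e^{A_1D}X_1(0)$ for $t\ge0$, hence $|Z(t)|,|u(t)|,|\dot u(t)|\le Ce^{-\mu t}$, and inverting the definition of $Z$ gives $|X_1(t)|\le Ce^{-\mu t}$ as well.

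\emph{Step 3 (lifting the boundary input).} The boundary input operator is unbounded — the forcing of the $n$-th mode is $-e_n'(L)\sim\sqrt{|\lambda_n|}$, which is not square-summable — so the $H^1$ estimate cannot be carried out on the modal series of $y$ itself. With $\psi(x)=x/L$ I set $w(t,x)=y(t,x)-u(t-D)\psi(x)$. Then for $t>D$ one has $w(t,\cdot)\in H^1_0(0,L)$ (it vanishes at both endpoints, and $y(t,\cdot)\in H^1$ by the smoothing effect of the parabolic semigroup) and
\[
w_t=w_{xx}+c\,w+g,\qquad g(t,x)=\bigl(c(x)u(t-D)-\dot u(t-D)\bigr)\psi(x),
\]
with $\|g(t,\cdot)\|_{L^2}\le Ce^{-\mu t}$ by Step 2. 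Since $\|y(t,\cdot)\|_{H^1}\le\|w(t,\cdot)\|_{H^1}+|u(t-D)|\,\|\psi\|_{H^1}$, it remains to show that $\|w(t,\cdot)\|_{H^1}$ decays exponentially.

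\emph{Step 4 (energy estimate on the stable tail).} I decompose $w=w^{\mathrm{low}}+w^{\mathrm{high}}$ with $w^{\mathrm{low}}$ the projection onto $\mathrm{span}(e_1,\dots,e_N)$; its coefficients are $y_n(t)-u(t-D)\langle\psi,e_n\rangle$ for $n\le N$, which decay exponentially by Step 2, so $\|w^{\mathrm{low}}(t,\cdot)\|_{H^1}\to0$ exponentially. On the complementary space $H^{\mathrm{high}}$, the part $w^{\mathrm{high}}$ solves $\partial_tw^{\mathrm{high}}=A^{\mathrm{high}}w^{\mathrm{high}}+g^{\mathrm{high}}$, where $A^{\mathrm{high}}$ is self-adjoint with spectrum in $(-\infty,\lambda_{N+1}]$, so $-A^{\mathrm{high}}\ge|\lambda_{N+1}|>0$. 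Taking $V(t)=\tfrac12\langle-A^{\mathrm{high}}w^{\mathrm{high}}(t,\cdot),w^{\mathrm{high}}(t,\cdot)\rangle$, an equivalent $H^1$ norm on the tail, I would compute
\[
\dot V=-\|A^{\mathrm{high}}w^{\mathrm{high}}\|_{L^2}^2-\langle A^{\mathrm{high}}w^{\mathrm{high}},g^{\mathrm{high}}\rangle\le-\tfrac12\|A^{\mathrm{high}}w^{\mathrm{high}}\|_{L^2}^2+\tfrac12\|g\|_{L^2}^2\le-|\lambda_{N+1}|\,V+\tfrac12\|g\|_{L^2}^2,
\]
and conclude by Gronwall's lemma (with $\|g(t,\cdot)\|_{L^2}\le Ce^{-\mu t}$) that $V$, hence $\|w^{\mathrm{high}}(t,\cdot)\|_{H^1}$, hence $\|w(t,\cdot)\|_{H^1}$ and $\|y(t,\cdot)\|_{H^1}$, decay exponentially. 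The main obstacle is precisely the combination of Steps 3 and 4: because the open-loop parabolic equation is itself unstable, no energy functional built directly on $y$ (or $w$) can decay, so one must separate the finitely many unstable modes — tamed by the finite-dimensional predictor — from the infinite-dimensional dissipative remainder, while simultaneously coping with the unbounded boundary input through the lift; the delicate point is that the forcing $g$ produced by the feedback is only $L^2$ (no smoother, as $c\in L^\infty$) and that $L^2$ decay of $g$ is exactly what the tail estimate consumes.
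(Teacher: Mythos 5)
Your proposal is correct in substance, but it follows a genuinely different route from the paper. The paper lifts first, setting $w=y-\frac{x}{L}u_D$, adjoins $u_D$ to the state and takes $\alpha=u_D'$ as the new input, so its unstable subsystem is the $(n+1)$-dimensional pair $(A_1,B_1)$ of \eqref{eq:def:A1}; you instead write the modal ODEs for $y$ itself, $\dot y_j=\lambda_j y_j-e_j'(L)u(t-D)$, and design the predictor on the $n$-dimensional diagonal pair with the physical input $u(t-D)$. The two designs hinge on the same non-degeneracy: your coefficient $-e_j'(L)$ is exactly the paper's $a_j+\lambda_j b_j$ in the Kalman determinant \eqref{deter1}, and both then use the Artstein reduction plus pole shifting. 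The bigger divergence is in the stabilization of the remainder: the paper builds one composite strict Lyapunov functional $V_D$ (finite-dimensional quadratic form, a delay-compensating integral term, and $-\frac12\langle w,Aw\rangle$, glued with a large weight $M(D)$) and proves $V_D'\leq -C_9(D)V_D$, whereas you run a cascade: exponential decay of $Z$, $u$, $\dot u$, $X_1$ from the finite-dimensional loop, then a Gronwall/ISS estimate on the stable tail of the lifted variable $w=y-u(t-D)\frac{x}{L}$ driven by the exponentially decaying $L^2$ forcing $g$. Your argument is simpler and avoids tuning $M(D)$, at the price of not producing a Lyapunov functional for the full closed loop (which the paper gets, and which is useful for robustness-type conclusions); note also two details you should flush out: the modal equation for $y_j$ with non-homogeneous Dirichlet data needs a transposition (or lift-based) justification — indeed it follows from the paper's $w_j'=\lambda_jw_j+a_ju_D+b_ju_D'$ with $y_j=w_j-b_ju_D$ — and your feedback makes $u$ jump at $t=0$, hence the boundary datum jumps at $t=D$; the paper avoids this by controlling $u_D'$ so that $u_D$ stays continuous, while in your scheme one should either smooth the switch-on or invoke parabolic smoothing and start the $H^1$ estimates at any $t_0>D$, which is harmless for the asymptotic statement. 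If the decay rate $\mu$ of the finite-dimensional part coincides with the tail gap $|\lambda_{N+1}|$, Gronwall gives a factor $t e^{-\mu t}$, still exponential for any slightly smaller rate, so the conclusion of Theorem \ref{thm1} stands.
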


\section{Construction of the feedback and proof of Theorem \ref{thm1}}
\subsection{Spectral reduction}
First of all, in order to deal rather with a homogeneous Dirichlet problem (which is more convenient), we set
\begin{equation}\label{defw}
w(t,x)=y(t,x)-\frac{x}{L}u_D(t),
\end{equation}
and we suppose that the control $u_D$ is derivable for all positive times (this will be true in the construction that we will carry out). This leads to
\begin{equation}\label{reducedproblem2}
\begin{split}
& w_t=w_{xx}+cw+\frac{x}{L}cu_D-\frac{x}{L}u_D' ,  \quad \forall t>0, \;\forall x \in (0,1) , \\
& w(t,0)=w(t,L)=0, \quad \forall t>0, \\
& w(0,x)=y(0,x)-\frac{x}{L}u_D(0) , \quad \forall x \in (0,1).
\end{split}
\end{equation}
We define the operator
\begin{equation}
A=\partial_{xx}+c(\cdot)\mathrm{id},\ 
\end{equation}
on the domain $D(A)=H^2(0,L)\cap H^1_0(0,L)$. 
Then the above control system is
\begin{equation}\label{newzero}
w_t(t,\cdot)=Aw(t,\cdot)+a(\cdot)u_D(t)+b(\cdot)u_D'(t),
\end{equation}
with $a(x)=\frac{x}{L}c(x)$ and $b(x)=-\frac{x}{L}$ for every $x\in(0,L)$.

Noting that $A$ is self-adjoint and of compact inverse, we consider a Hilbert basis $(e_j)_{j\geq 1}$ of $L^2(0,L)$ consisting of eigenfunctions of $A$, associated with the sequence of eigenvalues $(\lambda_j)_{j\geq 1}$. Note that
$$-\infty<\cdots<\lambda_j<\cdots<\lambda_1\quad \textrm{and}\quad \lambda_j\underset{j\rightarrow +\infty}{\longrightarrow}-\infty ,$$
and that $e_j(\cdot)\in H^1_0(0,L)\cap C^2([0,L])$ for every $j\geq 1$.
Every solution $w(t,\cdot)\in H^2(0,L)\cap H^1_0(0,L)$ of \eqref{newzero} can be expanded as a series in the eigenfunctions $e_j(\cdot)$, convergent in $H_0^1(0,L)$,
$$w(t,\cdot)=\sum_{j=1}^{\infty}w_j(t)e_j(\cdot),$$
and one gets the infinite-dimensional control system
\begin{equation}\label{eq16}
w_j'(t)=\lambda_jw_j(t)+a_ju_D(t)+b_ju_D'(t),
\end{equation}
with
\begin{equation}
\begin{split}
a_j&=\left\langle a(\cdot),e_j(\cdot)\right\rangle_{L^2(0,L)} = \frac{1}{L}\int_0^L xc(x)e_j(x) dx, \\
b_j&=\langle b(\cdot),e_j(\cdot)\rangle_{L^2(0,L)} = -\frac{1}{L}\int_0^L xe_j(x)dx,
\end{split}
\end{equation}
for every $j\in\N^*$.
We define
\begin{equation}\label{eqalphaD}
\alpha_D(t)=u_D'(t),
\end{equation}
and we consider from now on $u_D(t)$ as a state and $\alpha_D(t)$ as a control (destinated to be a delayed feedback, with constant delay $D$), so that equations \eqref{eq16} and \eqref{eqalphaD} form an infinite-dimensional control system controlled by $\alpha_D$, written as
\begin{equation}\label{sys-dim-infinie}
\begin{split}
u_D'(t) &= \alpha_D(t), \\
w_1'(t) &=\lambda_1w_1(t)+a_1u_D(t)+b_j\alpha_D(t), \\
& \vdots \\
w_j'(t) &=\lambda_jw_j(t)+a_ju_D(t)+b_j\alpha_D(t), \\
& \vdots 
\end{split}
\end{equation}

Let $n$ be the number of nonnegative eigenvalues, and let $\eta>0$ be such that
\begin{equation}\label{refeta}
\forall k>n\quad \lambda_k<-\eta<0.
\end{equation}
Let $\pi_1$ be the orthogonal projection onto the subspace of $L^2(0,L)$ spanned by
$e_1(\cdot),\ldots,e_n(\cdot)$, and let
\begin{equation}\label{newun}
w^1(t)=\pi_1w(t,\cdot)=\sum_{j=1}^nw_j(t)e_j(\cdot).
\end{equation}
%The $n$ first equations of \eqref{eq16}, and \eqref{eqalphaD}, form the finite-dimensional control system
%\begin{equation}\label{eq19}
%\begin{split}
%u_D' &= \alpha_D ,\\
%w_1' &= \lambda_1w_1 + a_1u_D + b_1\alpha_D  ,\\
%& \vdots \\
%w_n' &= \lambda_nw_n + a_nu_D + b_n\alpha_D .
%\end{split}
%\end{equation}
With the matrix notations
\begin{equation}\label{eq:def:A1}
X_1(t)=\begin{pmatrix} u_D(t) \\ w_1(t) \\ \vdots \\ w_n(t)
\end{pmatrix} , \
A_1=\begin{pmatrix}
0         &       0         & \cdots &    0           \\
a_1 & \lambda_1 & \cdots &    0           \\
\vdots    &  \vdots         & \ddots &   \vdots       \\
a_n &  0              & \cdots & \lambda_n
\end{pmatrix} , \
B_1=\begin{pmatrix} 1 \\ b_1 \\ \vdots \\
b_n \end{pmatrix} ,
\end{equation}
%the equations \eqref{eq19} are written as
the $n$ first equations of %\eqref{eq16}, and \eqref{eqalphaD}, 
\eqref{sys-dim-infinie}
form the finite-dimensional control system with input delay
\begin{equation}\label{systfini}
X'_1(t)=A_1X_1(t) + B_1\alpha_D(t) = A_1X_1(t) + B_1\alpha(t-D).
\end{equation}
Note that the state $X_1(t)\in\R^{n+1}$ involves the term $u_D(t)$ which is destinated to be delayed. 

Our objective is to design a feedback control $\alpha$ exponentially stabilizing the infinite-dimensional system \eqref{sys-dim-infinie}. 
We follows an idea used in \cite{CoronTrelat2004,CoronTrelat2006} in order to stabilize nonlinear heat and wave equations around a steady-state.
The idea consists of first designing a feedback control exponentially stabilizing the finite-dimensional system \eqref{systfini}, and then of proving that this feedback actually stabilizes the whole system \eqref{sys-dim-infinie}. The idea underneath is that the finite-dimensional system \eqref{systfini} contains the unstable modes of the whole system \eqref{sys-dim-infinie}, and thus has to be stabilized. It is however not obvious that this feedback stabilizing the unstable finite-dimensional part actually stabilizes as well the whole system (\ref{sys-dim-infinie}),  and this is proved using an appropriate Lyapunov functional.

Before going into details, we stress that this stabilization procedure is carried out with a very simple approach, easy to implement, and using very classical and well-known tools from the finite-dimensional linear setting.

\subsection{Stabilization of the unstable finite-dimensional part}
Let us design a feedback control stabilizing the control system with input delay \eqref{systfini}, as well as a Lyapunov functional.
First of all, following the so-called Artstein model reduction (see \cite{Artstein,Richard2003}), we set, for every $t\in\R$,
\begin{equation}\label{transfo_Artstein}
Z_1(t) = X_1(t) + \int_{t-D}^t e^{(t-s-D)A_1}B_1\alpha(s)\, ds = X_1(t) + \int_{0}^D e^{-\tau A_1}B_1\alpha(t-D+\tau)\, d\tau  ,
\end{equation}
and we get immediately
\begin{equation}\label{systfiniArtstein}
\dot{Z}_1(t) = A_1Z_1(t)+e^{-DA_1}B_1\alpha(t),
\end{equation}
which is a usual linear control system, without input delay, in $\R^{n+1}$.

\begin{lemma}\label{lem:1}
For every $D\geq 0$, the pair $(A_1,e^{-DA_1}B_1)$ satisfies the Kalman condition, that is,
\begin{equation}\label{kalman}
\mathrm{rank} \left( e^{-DA_1}B_1, A_1e^{-DA_1}B_1, \ldots, A_1^{n}e^{-DA_1}B_1  \right) = n+1 .
\end{equation}
\end{lemma}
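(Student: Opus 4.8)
The plan is to reduce the claim to the controllability of the pair $(A_1,B_1)$ (the case $D=0$) and then to verify that controllability via the Hautus (Popov–Belevitch–Hautus) eigenvector test, exploiting the explicit structure of $A_1$ and $B_1$ together with the Sturm–Liouville equation satisfied by the $e_j$. The reduction step is immediate: $e^{-DA_1}$ is invertible and commutes with every power of $A_1$, so
$$\left( e^{-DA_1}B_1,\, A_1e^{-DA_1}B_1,\, \ldots,\, A_1^{n}e^{-DA_1}B_1 \right) = e^{-DA_1}\left( B_1,\, A_1B_1,\, \ldots,\, A_1^{n}B_1 \right),$$
and left-multiplication by the invertible matrix $e^{-DA_1}$ preserves the rank; hence \eqref{kalman} holds for some, equivalently every, $D\geq 0$ if and only if the Kalman matrix of $(A_1,B_1)$ has rank $n+1$, i.e. if and only if $(A_1,B_1)$ is controllable.

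Next I would apply the Hautus test: $(A_1,B_1)$ is controllable if and only if no nonzero left eigenvector $v$ of $A_1$ satisfies $v^\top B_1=0$. Since $A_1$ is block lower-triangular, its eigenvalues are $0$ (from the $u_D$-line) and $\lambda_1,\dots,\lambda_n$, which are pairwise distinct. For the eigenvalue $\lambda_j$, a direct computation on $A_1^\top$ shows that the left eigenvector is proportional to the vector with $a_j/\lambda_j$ in the first slot, $1$ in the $j$-th mode slot and $0$ elsewhere, so that $v^\top B_1 = a_j/\lambda_j + b_j$, that is, up to the nonzero factor $1/\lambda_j$, the quantity
$$a_j+\lambda_j b_j=\frac1L\int_0^L x\big(c(x)-\lambda_j\big)e_j(x)\,dx=-\frac1L\int_0^L x\,e_j''(x)\,dx=-e_j'(L),$$
where I used $Ae_j=\lambda_j e_j$, i.e. $e_j''=(\lambda_j-c)e_j$, and then integrated by parts using $e_j(0)=e_j(L)=0$. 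For the eigenvalue $0$, the same computation gives that the only left eigenvector is proportional to $(1,0,\dots,0)^\top$, whose pairing with $B_1=(1,b_1,\dots,b_n)^\top$ equals $1\neq 0$.

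The crux — and essentially the only non-algebraic point — is to show $e_j'(L)\neq 0$. This follows from uniqueness for the Cauchy problem of the linear second-order ODE $e_j''=(\lambda_j-c)e_j$ on $[0,L]$: if $e_j(L)=e_j'(L)=0$ then $e_j\equiv 0$, contradicting that $e_j$ is a (nonzero) eigenfunction. Hence $a_j+\lambda_j b_j=-e_j'(L)\neq 0$ for every $j$, so no left eigenvector of $A_1$ is annihilated by $B_1$, $(A_1,B_1)$ is controllable, and by the reduction above \eqref{kalman} holds for all $D\geq 0$. The only mild subtlety is the possibility that some $\lambda_j=0$, in which case the eigenvalue $0$ of $A_1$ is defective; but one checks that its unique eigendirection is again proportional to $(1,0,\dots,0)^\top$ (using $a_j=-e_j'(L)\neq 0$ there), so the conclusion is unaffected.
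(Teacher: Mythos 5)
Your proposal is correct. It shares with the paper the two essential ingredients: the reduction to the pair $(A_1,B_1)$ via commutation and invertibility of $e^{-DA_1}$, and the computation $a_j+\lambda_j b_j=-e_j'(L)\neq 0$ (integration by parts plus Cauchy uniqueness for the second-order ODE). Where you diverge is in how controllability of $(A_1,B_1)$ is certified: the paper computes the Kalman determinant in closed form, $\det\left( B_1, A_1B_1,\ldots,A_1^nB_1\right)=\prod_{j=1}^n(a_j+\lambda_jb_j)\,\mathrm{VdM}(\lambda_1,\ldots,\lambda_n)$, and concludes from the distinctness of the $\lambda_j$, whereas you invoke the Hautus test and check that no left eigenvector of $A_1$ is annihilated by $B_1$. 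Your route is less computational (no determinant identity to verify) and it makes explicit the role of the zero eigenvalue coming from the $u_D$-line, including the degenerate case where some $\lambda_j=0$, which you correctly dispose of by noting that the Hautus criterion only involves eigenvectors and that the zero eigenvalue's eigendirection remains $(1,0,\ldots,0)^\top$ because $a_j=-e_j'(L)\neq 0$. The paper's route, in exchange, yields an explicit formula for the determinant (which also covers the case $\lambda_j=0$ automatically, since the Vandermonde involves only $\lambda_1,\ldots,\lambda_n$ and the first row of the Kalman matrix is $(1,0,\ldots,0)$), so both arguments are complete; yours is a legitimate, slightly more structural alternative.
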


\begin{proof}
Since $A_1$ and $e^{-DA_1}$ commute, and since $e^{-DA_1}$ is invertible, we have
\begin{equation*}
\begin{split}
& \mathrm{rank} \left( e^{-DA_1}B_1, A_1e^{-DA_1}B_1, \ldots, A_1^{n}e^{-DA_1}B_1  \right) \\
=&\ \mathrm{rank} \left( e^{-DA_1}B_1, e^{-DA_1}A_1B_1, \ldots, e^{-DA_1}A_1^{n}B_1  \right) \\
=&\ \mathrm{rank} \left( B_1, A_1B_1, \ldots, A_1^{n}B_1  \right),
\end{split}
\end{equation*}
and hence it suffices to prove that the pair $(A_1,B_1)$ satisfies the Kalman condition.
A simple computation leads to
\begin{equation}\label{deter1}
\textrm{det} \left( B_1, A_1B_1, \ldots, A_1^{n}B_1 \right)
= \prod_{j=1}^{n}(a_j+\lambda_jb_j)\,\mathrm{VdM}(\lambda_1,\ldots,\lambda_n) ,
\end{equation}
where $\mathrm{VdM}(\lambda_1,\ldots,\lambda_n)$ is a Van der Monde
determinant, and thus is never equal to zero since the real numbers $\lambda_j$, $j=1\ldots n$, are all distinct.
On the other part, using the fact that every $e_j(\cdot)$ is an eigenfunction of $A$ and belongs to $H^1_0(0,L)$, we have, for every integer $j$,
\begin{equation*}
a_j+\lambda_jb_j
= \frac{1}{L} \int_0^L x\left( c(x)e_j(x) -\lambda_je_j(x) \right) dx  
= -\frac{1}{L} \int_0^L x e_j''(x) \, dx  
= - e_j'(L), 
\end{equation*}
which is not equal to zero since $e_j(L)=0$ and $e_j(\cdot)$ is a nontrivial solution of a linear second-order scalar differential equation. The lemma is proved.
\end{proof}

Since the control system \eqref{systfiniArtstein} satisfies the Kalman condition, the well-known pole-shifting theorem and Lyapunov theorem imply the existence of a stabilizing gain matrix and of a Lyapunov functional (see, e.g., \cite{Khalil,Trelat2005}). This yields the following corollary.

\begin{corollary}\label{corkalman}
For every $D\geq 0$ there exists a $1\times (n+1)$ matrix $K_1(D)=\begin{pmatrix} k_0(D),k_1(D),\ldots,k_n(D) \end{pmatrix} $ such that $A_1+B_1e^{-DA_1}K_1(D)$ admits $-1$ as an eigenvalue with order $n+1$.
Moreover there exists a $(n+1)\times (n+1)$ symmetric positive definite matrix $P(D)$ such that
\begin{equation}\label{poleshifting}
P(D)\left(A_1+B_1e^{-DA_1}K_1(D)\right)+ \left(A_1+e^{-DA_1}B_1K_1(D)\right)^\top P(D) = -I_{n+1} .
\end{equation}
In particular, the function
\begin{equation}\label{defV1}
V_1(Z_1) = \frac{1}{2} Z_1^\top P(D)Z_1
\end{equation}
is a Lyapunov function for the closed-loop system $\dot{Z}_1(t) = (A_1+e^{-DA_1}B_1K_1(D))Z_1(t)$.
\end{corollary}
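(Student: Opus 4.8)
The plan is to read \eqref{systfiniArtstein} as a delay-free single-input linear system in $\R^{n+1}$ and to apply, in turn, the pole-shifting theorem and the Lyapunov lemma; Lemma \ref{lem:1} supplies exactly the controllability hypothesis that both require.

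First I would fix $D\geq 0$ and use Lemma \ref{lem:1}, according to which $(A_1,e^{-DA_1}B_1)$ satisfies the Kalman rank condition. Since the input is scalar, the pole-shifting theorem (Wonham's theorem, or, constructively, Ackermann's formula) yields a unique row vector $K_1(D)=(k_0(D),\ldots,k_n(D))$ for which the characteristic polynomial of $A_1+e^{-DA_1}B_1K_1(D)$ equals $(s+1)^{n+1}$; equivalently, $-1$ is an eigenvalue of algebraic multiplicity $n+1$. (The matrix meant here is the natural $(n+1)\times(n+1)$ rank-one feedback update $e^{-DA_1}B_1K_1(D)$, consistently with \eqref{poleshifting} and \eqref{defV1}.) Write $M(D)=A_1+e^{-DA_1}B_1K_1(D)$, which is then a Hurwitz matrix.

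Next, since $M(D)$ is Hurwitz, I would invoke the Lyapunov lemma with $Q=I_{n+1}$: the equation $M(D)^\top P+PM(D)=-I_{n+1}$, which is exactly \eqref{poleshifting}, has a unique symmetric solution, which is automatically positive definite and admits the convergent integral representation $P(D)=\int_0^{+\infty}e^{tM(D)^\top}e^{tM(D)}\,dt$. Differentiating $V_1$ from \eqref{defV1} along a trajectory of the closed-loop system $\dot{Z}_1=M(D)Z_1$ then gives $\dot{V}_1(Z_1(t))=\tfrac12 Z_1(t)^\top\left(M(D)^\top P(D)+P(D)M(D)\right)Z_1(t)=-\tfrac12\Vert Z_1(t)\Vert^2\leq-\tfrac{1}{\lambda_{\max}(P(D))}V_1(Z_1(t))$, so $V_1$ is a coercive strict Lyapunov function and the closed loop of \eqref{systfiniArtstein} is exponentially stable.

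I do not expect a genuine obstacle: the whole statement reduces to Lemma \ref{lem:1} together with two textbook results. The only mild care needed is the dimensional bookkeeping for the rank-one update, and the fact (already used in the proof of Lemma \ref{lem:1}) that $A_1$ and $e^{-DA_1}$ commute, so that controllability of $(A_1,B_1)$ transfers to $(A_1,e^{-DA_1}B_1)$. Should the subsequent analysis require $K_1$ and $P$ to depend continuously, or even analytically, on $D$, this can be read off from the explicit formulas (Ackermann's formula for $K_1(D)$, the integral formula for $P(D)$), but it is not needed for the corollary as phrased.
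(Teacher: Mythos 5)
Your proposal is correct and follows essentially the same route as the paper, which deduces the corollary from Lemma \ref{lem:1} by invoking the pole-shifting theorem and the Lyapunov theorem without further detail (citing \cite{Khalil,Trelat2005}). Your reading of the statement's $B_1e^{-DA_1}K_1(D)$ as the rank-one update $e^{-DA_1}B_1K_1(D)$, consistent with \eqref{poleshifting} and \eqref{defV1}, is indeed the intended one.
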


\begin{remark}
It can even be proved that $K_1(D)$ and $P(D)$ are smooth (i.e., of class $C^\infty$) with respect to $D$, but we do not need this property in this paper.
\end{remark}

\begin{remark} 
In the statement above we chose $-1$ as an eigenvalue of $A_1+B_1e^{-DA_1}K_1(D)$, but actually the pole-shifting theorem implies that, for every $(n+1)$-tuple $(\mu_0,\ldots,\mu_n)$ of eigenvalues there exists a $1\times (n+1)$ matrix $K_1(D)$ such that the eigenvalues $A_1+B_1e^{-DA_1}K_1(D)$ are exactly $(\mu_0,\ldots,\mu_n)$. The eigenvalue $-1$ was chosen for simplicity. What is important is to ensure that $A_1+B_1e^{-DA_1}K_1(D)$ is a Hurwitz matrix (that is, whose eigenvalues have a negative real part).

In practice other choices can be done, which can be more efficient according to such or such criterion (see \cite{Trelat2005}). For instance, instead of using the pole-shifting theorem, one could design a stabilizing gain matrix $K_1$ by using a standard Riccati procedure.
\end{remark}

\begin{remark}\label{rem_Lyap1}
From Corollary \ref{corkalman} we infer that for every $D\geq 0$ there exists $C_1(D)>0$ (depending smoothly on $D$) such that
\begin{equation}
\frac{d}{dt}V_1(Z_1(t)) = -\Vert Z_1(t)\Vert_{\R^{n+1}}^2 \leq -C_1(D)\, V_1(Z_1(t)),
\end{equation}
where $\Vert\ \Vert_{\R^{n+1}}$ is the usual Euclidean norm in $\R^{n+1}$.
\end{remark}

From Corollary \ref{corkalman}, the feedback $\alpha(t)=K_1(D)Z_1(t)$ stabilizes exponentially the control system \eqref{systfiniArtstein}.
Since $\alpha(t-D)$ is used in the control system \eqref{systfini}, and since in general we are only concerned with prescribing the future of a system, starting at time $0$, we assume that the control system \eqref{systfini} is uncontrolled for $t<0$, and from the starting time $t=0$ on we let the feedback act on the system. In other words, we set
\begin{equation}\label{defalpha}
\alpha(t) = \left\{ \begin{array}{ll}
0 & \textrm{if}\ t<D,\\
K_1(D)Z_1(t) & \textrm{if}\ t\geq D,
\end{array}\right.
\end{equation}
so that, with this control, the control system \eqref{systfini} with input delay is written as
$$
X_1'(t) = A_1X_1(t) + \chi_{(D,+\infty)}(t)B_1 K_1(D) Z_1(t-D),
$$
with $Z_1$ given by \eqref{transfo_Artstein}.
Here the notation $\chi_E$ stands the characteristic function of $E$, that is the function defined by $\chi_{E}(t)=1$ whenever $t\in E$ and $\chi_{E}(t)=0$ otherwise.
Using \eqref{transfo_Artstein}, the feedback $\alpha$ defined by \eqref{defalpha} is such that
\begin{equation}\label{def_alpha}
\alpha(t) = \left\{ \begin{array}{ll}
0 & \textrm{if}\ t<D,\\
K_1(D) X_1(t) + K_1(D) \int_{\max(t-D,D)}^t e^{(t-D-s)A_1}B_1\alpha(s)\, ds & \textrm{if}\ t\geq D.
\end{array}\right.
\end{equation}
In other words, the value of the feedback control $\alpha$ at time $t$ depends on $X_1(t)$ and of the controls applied in the past -- more precisely, of the values of $\alpha$ over the time interval $(\max(t-D,D),t)$.

\begin{lemma}
The feedback \eqref{def_alpha} stabilizes exponentially the control system \eqref{systfini}. 
\end{lemma}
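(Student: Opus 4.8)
The plan is to use the Artstein transformation \eqref{transfo_Artstein} as a bridge: it conjugates the delay system \eqref{systfini} to the delay-free system \eqref{systfiniArtstein}, for which Corollary \ref{corkalman} already provides exponential stability of the closed loop, and it remains to transfer that decay back to $X_1$. First I would settle well-posedness. On $[0,D]$ the system \eqref{systfini} is uncontrolled since $\alpha(t-D)=0$, so $X_1(t)=e^{tA_1}X_1(0)$; in particular $Z_1(D)=X_1(D)=e^{DA_1}X_1(0)$, the integral in \eqref{transfo_Artstein} vanishing because $\alpha\equiv 0$ on $[0,D)$. For $t\geq D$, formula \eqref{def_alpha} is a linear Volterra integral equation of the second kind for $\alpha$, with continuous kernel and an integration window of length at most $D$; coupled with the linear ODE \eqref{systfini} it is solved uniquely by the method of steps, yielding a unique locally absolutely continuous $X_1$ on $[0,+\infty)$ and, through \eqref{transfo_Artstein}, a unique $Z_1$. (The jump of $\alpha$ at $t=D$, and the resulting corner of $u_D$ at $t=2D$, are harmless for this notion of solution.)

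Next I would establish the decay of $Z_1$. For $t\geq D$ we substitute $\alpha(t)=K_1(D)Z_1(t)$ into \eqref{systfiniArtstein}, which becomes $\dot Z_1(t)=\left(A_1+e^{-DA_1}B_1K_1(D)\right)Z_1(t)$. By Corollary \ref{corkalman} and Remark \ref{rem_Lyap1}, $V_1(Z_1(t))\leq e^{-C_1(D)(t-D)}V_1(Z_1(D))$ for all $t\geq D$, and since $V_1$ is equivalent to the square of the Euclidean norm, there exist constants $C\geq 1$ and $\delta>0$ such that $\Vert Z_1(t)\Vert_{\R^{n+1}}\leq C e^{-\delta(t-D)}\Vert X_1(0)\Vert_{\R^{n+1}}$ for every $t\geq D$.

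Finally I would invert \eqref{transfo_Artstein} to recover $X_1$: one has $X_1(t)=Z_1(t)-\int_{t-D}^{t}e^{(t-s-D)A_1}B_1\alpha(s)\,ds$. For $t\geq 2D$ the whole integration interval $[t-D,t]$ lies in $[D,+\infty)$, so $\alpha(s)=K_1(D)Z_1(s)$ there; bounding $\Vert e^{(t-s-D)A_1}\Vert$ by $M:=\sup_{\tau\in[-D,0]}\Vert e^{\tau A_1}\Vert<+\infty$ (a constant independent of $t$) and inserting the exponential bound on $\Vert Z_1\Vert$ gives $\Vert X_1(t)\Vert_{\R^{n+1}}\leq C'e^{-\delta(t-2D)}\Vert X_1(0)\Vert_{\R^{n+1}}$, while on the compact interval $[0,2D]$ the function $X_1$ is bounded. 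Hence $t\mapsto\Vert X_1(t)\Vert_{\R^{n+1}}$ converges exponentially to $0$, which is the claim.

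The argument is essentially bookkeeping, and the one point that deserves care — the closest thing to an obstacle — is the last step: the passage from exponential decay of $Z_1$ to that of $X_1$ is legitimate precisely because the Artstein kernel $e^{(t-s-D)A_1}B_1$ is uniformly bounded in $t$ (the window $[t-D,t]$ has fixed length $D$ and $A_1$ is a fixed matrix), so that $X_1\leftrightarrow Z_1$ is a bounded transformation with bounded inverse on spaces of exponentially weighted functions and the decay rate is preserved; one also has to keep track of the fact that the feedback is switched on only at $t=D$, which is why the exponential estimates are stated from $t=D$ (resp.\ $t=2D$) on and the initial transient is absorbed into the constant.
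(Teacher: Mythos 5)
Your proposal is correct and follows the same route as the paper: exponential decay of $Z_1$ from the Lyapunov argument of Corollary \ref{corkalman}, hence exponential decay of $\alpha=K_1(D)Z_1$, and then transfer of the decay to $X_1$ through the Artstein relation \eqref{transfo_Artstein}, whose kernel is uniformly bounded on a window of fixed length $D$. The extra details you supply (well-posedness by the method of steps, explicit constants, handling of the transient on $[0,2D]$) only flesh out the paper's shorter argument.
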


\begin{proof}
By construction $t\mapsto Z_1(t)$ converges exponentially to $0$, and hence $t\mapsto \alpha(t)$ and thus $t\mapsto \int_{\max(t-D,D)}^t e^{(t-D-s)A_1)}B_1\alpha(s)\, ds$ converge exponentially to $0$ as well. Then the equality \eqref{transfo_Artstein} implies that $t\mapsto X_1(t)$ converges exponentially to $0$.
\end{proof}

\paragraph{Inversion of the Artstein transform.}
We show here how to invert the Artstein transform, with two motivations in mind:
\begin{itemize}
\item First of all, it is interesting to express the stabilizing control $\alpha$ (defined by \eqref{defalpha}) directly as a feedback of $X_1$. 
\item Secondly, it is interesting to express the Lyapunov functional $V_1$ (defined by \eqref{defV1}) as a function of $X_1$.
\end{itemize}
To reach this objective, it suffices to solve the fixed point implicit equality \eqref{def_alpha}. For every function $f$ defined on $\R$ and locally integrable, we define 
$$
(T_Df)(t) = K_1(D) \int_{\max(t-D,D)}^t e^{(t-D-s)A_1}B_1 f(s)\, ds.
$$
It follows from \eqref{def_alpha} that $\alpha(t) = K_1(D)X_1(t)+(T_D\alpha)(t)$, for every $t\geq D$. 
We have the following lemma, proved in \cite{PrieurTrelat}.

\begin{lemma}\label{lem_alpha}
There holds
\begin{equation}\label{alphaexpanded}
\alpha(t) = \left\{ \begin{array}{ll}
0 & \textrm{if}\ t<D,\\
\displaystyle\sum_{j=0}^{+\infty} (T_D^j K_1(D)X_1)(t) & \textrm{if}\ t\geq D,
\end{array}\right.
\end{equation}
and the series is convergent, whatever the value of the delay $D\geq 0$ may be.
\end{lemma}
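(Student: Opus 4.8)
The plan is to recognise \eqref{def_alpha}, for $t\ge D$, as the Volterra fixed point equation $\alpha = K_1(D)X_1 + T_D\alpha$ and to solve it by a Neumann series, the convergence being of the exponential-of-the-operator type typical of Volterra operators. First I would record the structure of $T_D$: since the integration variable in $(T_Df)(t)=K_1(D)\int_{\max(t-D,D)}^t e^{(t-D-s)A_1}B_1f(s)\,ds$ runs over $[\max(t-D,D),t]\subseteq[D,t]$, the operator $T_D$ only ever evaluates $f$ on $[D,+\infty)$, so it is a well-defined causal (Volterra-type) operator on functions defined on $[D,+\infty)$; in particular no value of $\alpha$ on $(-\infty,D)$ is ever needed, which is consistent with the convention $\alpha\equiv 0$ there.

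Next I would fix an arbitrary horizon $T>D$ and work in $L^\infty(D,T)$. Because $(t,s)\mapsto e^{(t-D-s)A_1}$ is continuous and the exponent $t-D-s$ stays in the bounded set $[-D,0]$ on the domain of integration, there is a constant $M=M(D)>0$ with $|(T_Df)(t)|\le M\int_D^t|f(s)|\,ds$ for every $t\in[D,T]$. Iterating this nested-integral bound, an easy induction on $j$ gives $|(T_D^jf)(t)|\le \frac{M^j(t-D)^j}{j!}\,\|f\|_{L^\infty(D,t)}$, hence $\|T_D^j\|_{\mathcal L(L^\infty(D,T))}\le \frac{(M(T-D))^j}{j!}$ and $\sum_{j\ge0}\|T_D^j\|\le e^{M(T-D)}<\infty$.

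It then follows that $S:=\sum_{j=0}^{\infty}T_D^j$ converges in operator norm on $L^\infty(D,T)$, and the usual telescoping identity yields $(I-T_D)S=S(I-T_D)=I$, so that $I-T_D$ is boundedly invertible with inverse $S$. Since the closed-loop trajectory $X_1$ is absolutely continuous, $t\mapsto K_1(D)X_1(t)$ lies in $L^\infty(D,T)$, and therefore the unique solution in $L^\infty(D,T)$ of $\alpha=K_1(D)X_1+T_D\alpha$ — i.e.\ of \eqref{def_alpha} restricted to $[D,T]$ — is $\alpha=S(K_1(D)X_1)=\sum_{j=0}^{\infty}(T_D^jK_1(D)X_1)(\cdot)$, the series converging absolutely and uniformly on $[D,T]$. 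As $T>D$ was arbitrary and the restrictions to smaller intervals are compatible by causality, this identity holds for all $t\ge D$ and all $D\ge0$; combined with $\alpha(t)=0$ for $t<D$, this is exactly \eqref{alphaexpanded}.

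There is no deep obstacle here, the whole content being the Volterra estimate that produces the factor $1/j!$ and hence genuine (not merely formal) convergence of the Neumann series. The only two points requiring a little care are: first, checking that $T_D$ needs no information on $\alpha$ prior to time $D$ — which holds because the lower limit $\max(t-D,D)$ is always $\ge D$ — so that the representation is consistent with the convention that the control vanishes before $D$; and second, noting that the free term $K_1(D)X_1$ is locally bounded, so that the fixed point genuinely lives in the space $L^\infty(D,T)$ on which $S$ is defined.
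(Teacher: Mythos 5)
Your argument is correct, and it is worth noting that the paper itself gives no proof of this lemma: it only states that the formula follows from a formal computation, that the convergence of the series is the nontrivial point, and defers the proof to the reference \cite{PrieurTrelat}. Your Neumann-series treatment supplies exactly the missing content, and by the standard mechanism: since the exponent $t-D-s$ stays in $[-D,0]$ on the integration range $[\max(t-D,D),t]\subseteq[D,t]$, you get $\vert (T_Df)(t)\vert\leq M\int_D^t\vert f(s)\vert\,ds$ with $M$ depending only on $D$, $\Vert A_1\Vert$, $\Vert B_1\Vert$, $\Vert K_1(D)\Vert$, and the induction yields the Volterra bound $\Vert T_D^j\Vert_{\mathcal L(L^\infty(D,T))}\leq (M(T-D))^j/j!$, so the series converges for every $D\geq 0$ rather than only under a small-gain condition $\Vert T_D\Vert<1$ -- which is precisely why the lemma can claim convergence ``whatever the value of the delay may be''. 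Two small points to make explicit if you write this up: (i) to identify the actual feedback with $S(K_1(D)X_1)$ via uniqueness of the fixed point in $L^\infty(D,T)$, you should observe that the feedback defined by \eqref{defalpha}, namely $\alpha=K_1(D)Z_1$ on $[D,+\infty)$ with $Z_1$ solving the linear ODE \eqref{systfiniArtstein}, is continuous hence belongs to $L^\infty(D,T)$, so that it is the unique solution and therefore equals the series; (ii) the convergence you obtain is absolute and uniform on each compact interval $[D,T]$, which is all the lemma asserts. With these remarks your proof is complete and self-contained, matching in spirit the Volterra-kernel argument the paper attributes to \cite{PrieurTrelat}.
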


Note that the value of the feedback $\alpha$ at time $t$,
\begin{equation*}
\begin{split}
\alpha(t) &= K_1(D)X_1(t) + K_1(D) \int_{\max(t-D,D)}^t e^{(t-D-s)A_1}B_1K_1(D) X_1(s)\, ds \\
& \qquad
+ K_1(D) \int_{\max(t-D,D)}^t e^{(t-D-s)A_1}B_1 K_1(D) \int_{\max(s-D,D)}^s e^{(s-D-\tau)A_1}B_1K_1(D) X_1(\tau)\, d\tau \, ds \\
& \qquad
+\cdots
\end{split}
\end{equation*}
depends on the past values of $X_1$ over the time interval $(D,t)$.
Since the feedback is retarded with the delay $D$, the term $\alpha(t-D)$ appearing at the right-hand side of \eqref{systfini} only depends on the values of $X_1(s)$ with $0<s<t-D$, as desired.

We stress that in the above result the convergence of the series is the nontrivial fact. Otherwise the formula can be obtained from an immediate formal computation.
\begin{remark}\label{remark4}
It is also interesting to express $Z_1$ in function of $X_1$, that is, to invert the equality
\begin{equation}\label{X1enfonctiondeZ1}
Z_1(t) = X_1(t) + \int_{(t-D,t)\cap(D,+\infty)} e^{(t-s-D)A_1}B_1 K_1(D) Z_1(s) \, ds
\end{equation}
coming from \eqref{transfo_Artstein} and \eqref{defalpha}. 
Although it is technical and not directly useful to derive the exponential stability of $Z_1$, it will however allow us to express the Lyapunov functional $V_1$ defined by \eqref{defV1}.
Note that
\begin{equation}\label{ref_intervalle}
(t-D,t)\cap(D,+\infty) = \left\{\begin{array}{ll}
\emptyset & \textrm{if}\ t<D, \\
(D,t) & \textrm{if}\ D<t<2D, \\
(t-D,t) & \textrm{if}\ 2D<t.
\end{array}\right.
\end{equation}
In particular if $t<D$ then $Z_1(t)=X_1(t)$.
Actually we have the following precise result (see \cite{PrieurTrelat}).

\begin{lemma}\label{lemtechnique}
For every $t\in\R$, there holds
\begin{equation}\label{X1Z1}
X_1(t) = Z_1(t) - \int_{(t-D,t)\cap(D,+\infty)} f(t-s) X_1(s)\, ds,
\end{equation}
where $f$ is defined as the unique solution of the fixed point equation
$$
f(r) =  f_0(r)  + (\tilde T_D f)(r),
$$
with $f_0(r)=e^{(r-D)A_1} B_1K_1(D)$ and
$$
(\tilde T_D f)(r) = \int_0^{r} e^{(r-\tau-D)A_1} B_1K_1(D) f(\tau)\, d\tau.
$$
Moreover, we have
\begin{equation*}
\begin{split}
f(r) &= \sum_{j=0}^{+\infty} (\tilde T_D^j f_0)(r) \\
& = e^{(r-D)A_1} B_1K_1(D)  \\
&\qquad + \int_0^{r} e^{(r-\tau-D)A_1} B_1K_1(D) e^{(\tau-D)A_1} B_1K_1(D)   \, d\tau \\
&\qquad + 
\int_0^{r} e^{(r-\tau-D)A_1} B_1K_1(D) \int_0^{\tau} e^{(\tau-s-D)A_1} B_1K_1(D) e^{(s-D)A_1} B_1K_1(D)   \, ds   \, d\tau \\
&\qquad + \cdots
\end{split}
\end{equation*}
and the series is convergent, whatever the value of the delay $D\geq 0$ may be.
\end{lemma}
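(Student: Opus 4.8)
The plan is to treat the statement in two steps: first the function $f$ alone — that the fixed-point equation $f=f_0+\tilde T_D f$ has a unique solution, given by the convergent series $\sum_{j\geq 0}\tilde T_D^j f_0$ — and then the identity \eqref{X1Z1}. Exactly as for Lemma~\ref{lem_alpha}, the only genuinely nontrivial point is the convergence of the series; the identity itself is a formal resolvent computation. So I would establish the convergence first.

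For the convergence and uniqueness of $f$: the operator $\tilde T_D$ is of Volterra type, $(\tilde T_D g)(r)=\int_0^r e^{(r-\tau-D)A_1}B_1K_1(D)\,g(\tau)\,d\tau$, so its iterates gain a factorial. Fix $R>0$ (only $R=D$ is needed, since the argument $t-s$ of $f$ in \eqref{X1Z1} runs over $(0,D)$) and set $M=\sup_{|\rho|\leq\max(D,R)}\Vert e^{\rho A_1}B_1K_1(D)\Vert$. An immediate induction on $j$ gives $\Vert(\tilde T_D^j f_0)(r)\Vert\leq M^{j+1}r^j/j!$ for $r\in[0,R]$. Hence $\sum_{j\geq 0}\tilde T_D^j f_0$ converges absolutely and uniformly on $[0,R]$, for every value of the delay $D\geq 0$, to a continuous function $f$ which solves $f=f_0+\tilde T_D f$ (interchange of sum and integral). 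Uniqueness is then immediate: the difference of two solutions is a fixed point of $\tilde T_D$, which the same estimate forces to vanish.

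For \eqref{X1Z1}: first observe that, by the definition of $\tilde T_D$ and of $f_0(r)=e^{(r-D)A_1}B_1K_1(D)$, the iterate $\tilde T_D^{j}f_0$ is the $(j+1)$-fold convolution power of $f_0$ (for the convolution $(g\star h)(r)=\int_0^r g(r-\tau)h(\tau)\,d\tau$), so $f=\sum_{j\geq 0}\tilde T_D^j f_0$ is exactly the resolvent kernel associated with $f_0$. I would then argue by the method of steps on the intervals $[kD,(k+1)D]$. On $[0,D]$ the window $(t-D,t)\cap(D,+\infty)$ is empty, \eqref{X1enfonctiondeZ1} reduces to $Z_1=X_1$, and \eqref{X1Z1} is trivial. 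On each subsequent interval, \eqref{X1enfonctiondeZ1} is a linear Volterra integral equation of the second kind for $Z_1$ — in terms of $X_1$ and of the values of $Z_1$ already known on the earlier intervals — with the convolution kernel $e^{(t-s-D)A_1}B_1K_1(D)=f_0(t-s)$; solving it by its resolvent kernel $f$ and reassembling the pieces, using $f=f_0+\tilde T_D f$ and Fubini to telescope the nested integrals, produces \eqref{X1Z1}. Equivalently, one substitutes the right-hand side of \eqref{X1Z1} into \eqref{X1enfonctiondeZ1}, checks that it satisfies that equation, and concludes by uniqueness of the Volterra solution; the only work there is the bookkeeping of the nested, delay-dependent integration windows.

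The main obstacle is precisely the point the paper flags for the analogous lemma: the absolute convergence of the Neumann series for $f$ for arbitrary $D\geq 0$ — handled by the factorial estimate above — together with the careful tracking of how the windows $(t-D,t)\cap(D,+\infty)$ compose under iteration, for which the method of steps is the natural organizing device. Once convergence is secured, the identity is a routine resolvent/Fubini manipulation.
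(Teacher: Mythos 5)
Your first step---existence and uniqueness of $f$ and the convergence of the Neumann series $\sum_{j\geq 0}\tilde T_D^jf_0$ via the factorial Volterra estimate $\Vert(\tilde T_D^jf_0)(r)\Vert\leq M^{j+1}r^j/j!$---is correct and unproblematic (note, incidentally, that the paper itself does not prove this lemma but refers to \cite{PrieurTrelat}, so there is no in-paper argument to compare with).

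The gap is in the second step, and it sits exactly at the point you defer as ``bookkeeping of the nested, delay-dependent integration windows''. For $D<t<2D$ the window in \eqref{X1enfonctiondeZ1} is $(D,t)$, the equation is a genuine Volterra equation based at $D$, and your resolvent computation goes through. But for $t>2D$ the window is the \emph{sliding} interval $(t-D,t)$: when you substitute the right-hand side of \eqref{X1Z1} into \eqref{X1enfonctiondeZ1}, the resulting double integral runs over $\{(s,\tau):\ s\in(t-D,t),\ \tau\in(\max(s-D,D),s)\}$ and therefore produces terms in $X_1(\tau)$ with $\tau$ as far back as $t-2D$, i.e.\ outside the claimed window $(t-D,t)$; moreover the Fubini step that should reproduce $f=f_0+\tilde T_Df$ needs the inner $s$-integration to cover all of $(\tau,t)$, whereas the actual domain is truncated at $t-D$. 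So the substitution does not close, and iterating the equation reaches back, interval by interval, to the whole history on $(D,t)$, with iterated kernels carrying the constraint that each intermediate gap be smaller than $D$ --- a constraint absent from the unconstrained convolution powers $\tilde T_D^jf_0$. A concrete check that the proposed route cannot be completed as described: take $n+1=1$, $A_1=0$, $B_1=1$, $K_1(D)=k$, so $f_0\equiv k$ and $f(r)=ke^{kr}$; the closed-loop trajectory from $X_1(0)=X_0$ satisfies $X_1\equiv X_0$ on $[0,2D]$, $X_1(t)=X_0e^{k(t-2D)}$ for $t\geq 2D$, $Z_1(t)=X_0e^{k(t-D)}$ for $t\geq D$, and for $2D<t<3D$ the identity \eqref{X1Z1} with kernel $f$ and window $(t-D,t)$ would force $e^{kD}\bigl(1-e^{-k(t-2D)}\bigr)=k(t-2D)$, which fails unless $kD=0$ (on $(D,2D)$, by contrast, it checks out). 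Hence ``solve by the resolvent kernel and reassemble'' is not a routine manipulation here: any correct treatment must work with the sliding-window operator (the analogue of $T_D$ in Lemma \ref{lem_alpha}, whose lower limit $\max(t-D,D)$ encodes the window), not with the fixed-base operator $\tilde T_D$, and the verification-by-substitution you propose cannot produce \eqref{X1Z1} for $t>2D$.
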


With this expression and using (\ref{X1enfonctiondeZ1}) in Remark \ref{remark4}, the feedback $\alpha$ can be as well written as
\begin{equation*}
\begin{split}
\alpha(t)&= \chi_{(D,+\infty)}(t) K_1(D)Z_1(t) \\
& = \chi_{(D,+\infty)}(t) K_1(D) X_1(t) + K_1(D)\int_{(t-D,t)\cap(D,+\infty)} f(t-s) X_1(s)\, ds ,
\end{split}
\end{equation*}
and we recover of course the expression \eqref{alphaexpanded} derived in Lemma \ref{lem_alpha}.
\end{remark}

Plugging this feedback into the control system \eqref{systfini} yields, for $t>D$, the closed-loop system
\begin{equation}\label{closed-loop_system_complique}
\begin{split}
X'_1(t) &=A_1X_1(t) + B_1\alpha(t-D) \\
&= A_1X_1(t) + B_1K_1(D)X_1(t-D) +B_1 K_1(D) \int_{(t-D,t)\cap(D,+\infty)} f(t-s) X_1(s)\, ds  ,
\end{split}
\end{equation}
which is, as said above, exponentially stable. Moreover, the Lyapunov function $V_1$, which is exponentially decreasing according to Remark \ref{rem_Lyap1}, can be written as
\begin{equation*}
V_1(t) = \frac{1}{2} \left(X_1(t)+ \int_{I_t(D)} f(t-s) X_1(s)\, ds\right)^\top P(D) \left(X_1(t)+ \int_{I_t(D)} f(t-s) X_1(s)\, ds\right) .
\end{equation*}
with $I_t(D) = (t-D,t)\cap(D,+\infty)$.
We stress once again that the above feedback and Lyapunov functional stabilize the system whatever the value of the delay may be.

\begin{remark}
Let us make a remark on the practical implementation.
Although the expression \eqref{alphaexpanded} has some theoretical interest, in practice we do not use it to compute $\alpha(t)$, and we use instead the gain matrix $K_1(D)$ whose computation, based on the knowledge of $(A_1,e^{-DA_1}B_1)$, is a very easy task.
Moreover, instead of considering the closed-loop system \eqref{closed-loop_system_complique}, it is far more convenient to consider the equivalent system
\begin{equation}\label{dynamstab}
\begin{split}
X_1'(t) &= AX_1(t) + B_1K_1(D)Z_1(t-D)  ,\\
Z_1'(t) &= (A_1+e^{-DA_1}B_1K_1(D))Z_1(t) ,
\end{split}
\end{equation}
which in this form looks more like a dynamic stabilization procedure (see \cite{Sontag}). These implementation issues are analyzed in detail in \cite{PrieurTrelat}.
\end{remark}

\subsection{Stabilization of the whole system}
In order to prove that the feedback $\alpha$ designed above stabilizes the whole system \eqref{sys-dim-infinie} we have to take into account the rest of the system (consisting of modes that are naturally stable). What has to be checked is whether or not the delayed control part might destabilize this infinite-dimensional part. 

Let $(u_D(\cdot),w(\cdot))$ denote a solution of \eqref{newzero} in which we choose the control $\alpha$ in the feedback form designed previously, such that $u_D(0)=0$ and $w(0)=0$.
Here, we make a slight abuse of notation, since $w(t)$ designates the solution $w(t,\cdot)\in H^2(0,L)\cap H^1_0(0,L)$ satisfying
\begin{equation}\label{eq444}
\begin{split}
& u_D' = \alpha,\quad w' = Aw+au_D+b\alpha ,\\
& u_D(0)=0, \quad w(0,\cdot)=0.
\end{split}
\end{equation}
Let $M(D)$ be a positive real number such that
\begin{equation}\label{defM}
\begin{split}
M(D)\ > \ & \Vert b \Vert_{L^2(0,L)}^2 \Vert K_1(D)\Vert_{\R^{n+1}}^2 \\
& + \max\left(2 \Vert a\Vert^2_{L^2(0,L)},  \frac{\max(\lambda_1,\ldots,\lambda_n) }{\lambda_{\min}(P(D))} \right)\max\left(1,D e^{2D\Vert A_1\Vert} \Vert B_1\Vert_{\R^{n+1}}^2 \Vert K_1(D)\Vert_{\R^{n+1}}^2\right)  ,
\end{split}
\end{equation}
where $ \Vert K_1(D)\Vert_{\R^{n+1}}^2 = \sum_{j=0}^{n} k_j(D)^2$,
$ \Vert B_1\Vert_{\R^{n+1}}^2 = 1+\sum_{j=1}^n b_j^2$, where $\Vert A_1\Vert$ is the usual matrix norm induced from the Euclidean norm of $\R^{n+1}$, and where $\lambda_{\min}(P(D))>0$ is the smallest eigenvalue of the symmetric positive definite matrix $P(D)$. The precise value of $M(D)$ is not important however. What is important in what follows is that $M(D)>0$ is large enough.

We set
\begin{equation}\label{defVD}
\begin{split}
V_D(t) &= M(D)\,V_1(t) + M(D)\, \int_{(t-D,t)\cap(D,+\infty)} V_1(s)\, ds - \frac{1}{2}\langle w(t),Aw(t)\rangle_{L^2(0,L)} \\
&= \frac{M(D)}{2}Z_1(t)^\top P(D)Z_1(t) + \frac{M(D)}{2} \int_{(t-D,t)\cap(D,+\infty)} Z_1(s)^\top P(D)Z_1(s)\, ds \\
&\qquad\qquad\qquad\qquad\qquad\qquad\qquad\qquad\qquad\qquad\qquad\qquad
 - \frac{1}{2}\sum_{j=1}^{+\infty} \lambda_jw_j(t)^2.
\end{split}
\end{equation}
We are going to prove that $V_D(t)$ is positive and decreases exponentially to $0$. This Lyapunov functional consists of three terms. The two first terms stand for the unstable finite-dimensional part of the system. As we will see, the integral term is instrumental in order to tackle the delayed terms. The third term stands for the infinite-dimensional part of the system. In this infinite sum actually all modes are involved, in particular those that are unstable. Then the two first terms of \eqref{defVD}, weighted with $M(D)>0$, can be seen as corrective terms and this weight $M(D)>0$ is chosen large enough so that $V_D(t)$ be indeed positive.
More precisely,
\begin{equation}\label{12:55}
- \frac{1}{2}\sum_{j=1}^{+\infty} \lambda_jw_j(t)^2 = - \frac{1}{2}\sum_{j=1}^n \lambda_jw_j(t)^2 - \frac{1}{2}\sum_{j=n+1}^\infty \lambda_jw_j(t)^2,
\end{equation}
where $\lambda_j\geq 0$ for every $j\in\{1,\ldots,n\}$ and $\lambda_j\leq-\eta<0$ for every $j>n$ (see \eqref{refeta}). 
Therefore the second term of \eqref{12:55} is positive and the first term, which is nonpositive, is actually compensated by the first term of $V_D(t)$ since $M(D)$ is large enough, as proved in the following more precise lemma.

\begin{lemma} \label{lemma2.4}
There exists $C_2(D)>0$ such that
\begin{equation}\label{equiv2}
V_D(t) \geq C_2(D) \left( u_D(t)^2 + \Vert w(t)\Vert_{H^1_0(0,L)}^2 \right), 
\end{equation}
for every $t\geq 0$. 
\end{lemma}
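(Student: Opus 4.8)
The plan is to reduce the inequality \eqref{equiv2} to a comparison of quadratic forms, after expanding $w(t,\cdot)$ in the eigenbasis $(e_j)_{j\ge1}$. First I would rewrite the right-hand side: the $H^1_0(0,L)$-norm being equivalent to $w\mapsto(\int_0^Lw_x^2\,dx)^{1/2}$, an integration by parts together with $w_{xx}=Aw-c(\cdot)w$ gives
\begin{equation*}
\int_0^Lw_x(t,x)^2\,dx=-\langle w(t),Aw(t)\rangle_{L^2(0,L)}+\int_0^Lc(x)w(t,x)^2\,dx=-\sum_{j=1}^{+\infty}\lambda_jw_j(t)^2+\int_0^Lc(x)w(t,x)^2\,dx.
\end{equation*}
Splitting the spectrum at $n$ — so that $\lambda_j\ge0$ for $j\le n$ while $-\lambda_j\ge\eta>0$ for $j>n$ by \eqref{refeta} — and using $\|w(t)\|_{L^2(0,L)}^2\le\|X_1(t)\|_{\R^{n+1}}^2+\tfrac1\eta\sum_{j>n}(-\lambda_j)w_j(t)^2$ together with $u_D(t)^2\le\|X_1(t)\|^2$, one obtains an inequality of the form
\begin{equation*}
u_D(t)^2+\|w(t)\|_{H^1_0(0,L)}^2\ \le\ C_3(D)\Big(\|X_1(t)\|_{\R^{n+1}}^2+\sum_{j>n}(-\lambda_j)w_j(t)^2\Big),
\end{equation*}
with $C_3(D)$ depending only on $\|c\|_{L^\infty(0,L)}$, on $\eta$ and on the Poincar\'e constant. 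So it suffices to bound $V_D(t)$ from below by a positive multiple of $\|X_1(t)\|_{\R^{n+1}}^2+\sum_{j>n}(-\lambda_j)w_j(t)^2$.

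For the lower bound on $V_D(t)$ I would use the splitting \eqref{12:55}, $-\tfrac12\sum_j\lambda_jw_j^2=-\tfrac12\sum_{j\le n}\lambda_jw_j^2+\tfrac12\sum_{j>n}(-\lambda_j)w_j^2$, and bound the first (nonpositive) sum from below by $-\tfrac12\max(\lambda_1,\dots,\lambda_n)\|X_1(t)\|_{\R^{n+1}}^2$. From \eqref{defVD} this yields
\begin{equation*}
V_D(t)\ \ge\ M(D)V_1(t)+M(D)\!\!\int_{I_t(D)}\!\!\!V_1(s)\,ds+\tfrac12\sum_{j>n}(-\lambda_j)w_j(t)^2-\tfrac12\max(\lambda_1,\dots,\lambda_n)\,\|X_1(t)\|_{\R^{n+1}}^2,
\end{equation*}
with $I_t(D)=(t-D,t)\cap(D,+\infty)$, and the whole problem is to absorb the last term into the first two.

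The crux — and the step I expect to be the real obstacle — is to control $\|X_1(t)\|_{\R^{n+1}}^2$ by $V_1(t)$ together with its running integral $\int_{I_t(D)}V_1(s)\,ds$; this is precisely why the integral term was built into $V_D$. For this I would use the inversion identity coming from \eqref{X1enfonctiondeZ1}, $X_1(t)=Z_1(t)-\int_{I_t(D)}e^{(t-s-D)A_1}B_1K_1(D)Z_1(s)\,ds$. On $I_t(D)\subset(t-D,t)$ one has $t-s-D\in(-D,0)$, hence $\|e^{(t-s-D)A_1}\|\le e^{D\|A_1\|}$, and since $|I_t(D)|\le D$, the Cauchy--Schwarz inequality together with $Z_1^\top P(D)Z_1\ge\lambda_{\min}(P(D))\|Z_1\|^2$ gives
\begin{equation*}
\|X_1(t)\|_{\R^{n+1}}^2\ \le\ \frac{4}{\lambda_{\min}(P(D))}\Big(V_1(t)+D\,e^{2D\|A_1\|}\|B_1\|_{\R^{n+1}}^2\|K_1(D)\|_{\R^{n+1}}^2\!\!\int_{I_t(D)}\!\!\!V_1(s)\,ds\Big)
\end{equation*}
(when $t<D$ this is trivial, since then $Z_1(t)=X_1(t)$ and $I_t(D)=\emptyset$).

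Finally, plugging this estimate into the lower bound for $V_D(t)$ and invoking the definition of $M(D)$ in \eqref{defM} — which makes $M(D)$ large enough that, after the subtraction, the coefficients of $V_1(t)$ and of $\int_{I_t(D)}V_1(s)\,ds$ stay bounded below by a positive multiple of $M(D)$ — one gets $V_D(t)\ge c_1(D)\big(V_1(t)+\int_{I_t(D)}V_1(s)\,ds\big)+\tfrac12\sum_{j>n}(-\lambda_j)w_j(t)^2$. Reading the previous displayed estimate backwards then gives $V_D(t)\ge c_2(D)\|X_1(t)\|_{\R^{n+1}}^2+\tfrac12\sum_{j>n}(-\lambda_j)w_j(t)^2$, and comparing with the first display above yields \eqref{equiv2} with $C_2(D)=\min\big(c_2(D)/C_3(D),\,1/(2C_3(D))\big)$. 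Besides the kernel estimate on the moving window $I_t(D)$, everything is routine bookkeeping; in particular any slack in the constants is harmless, since \eqref{defM} only requires $M(D)$ to be "large enough".
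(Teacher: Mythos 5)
Your argument is correct and follows essentially the same route as the paper: you expand $\Vert w\Vert_{H^1_0}^2$ via the eigenbasis identity \eqref{cp:1}, split the spectrum at $n$, absorb the finitely many unstable modes using $M(D)$ large, and control $\Vert X_1(t)\Vert_{\R^{n+1}}^2$ through the inversion identity \eqref{X1enfonctiondeZ1} with Cauchy--Schwarz and the bound $e^{2D\Vert A_1\Vert}$ (your estimate in terms of $V_1$ and $\int_{I_t(D)}V_1$ is exactly the paper's combination of \eqref{lambdamin1} and \eqref{est3}). The only differences are cosmetic --- you invoke the spectral gap $\eta$ where the paper uses $\lambda_j\to-\infty$, and your constants differ by harmless factors, with the same reliance as the paper on $M(D)$ being ``large enough''.
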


\begin{proof}
First of all, by definition of $\lambda_{\min}(P(D))$, one has
\begin{equation}\label{lambdamin1}
\begin{split}
& \frac{M(D)}{2}Z_1(t)^\top P(D)Z_1(t) + \frac{M(D)}{2} \int_{t-D}^t Z_1(s)^\top P(D)Z_1(s)\, ds \\
\geq &\ M(D) \frac{\lambda_{\min}(P(D))}{2} \left( \Vert Z_1(t)\Vert_{\R^{n+1}}^2 + \int_{t-D}^t \Vert Z_1(s)\Vert_{\R^{n+1}}^2\, ds \right) ,
\end{split}
\end{equation}
for every $t\geq 0$.
Besides, recall that, from \eqref{X1enfonctiondeZ1}, one has
$$
X_1(t) = Z_1(t) - \int_{(t-D,t)\cap(D,+\infty)} e^{(t-s-D)A_1}B_1 K_1(D) Z_1(s) \, ds,
$$
and therefore, using the Cauchy-Schwarz inequality and the inequality $(a+b)^2\leq 2a^2+2b^2$, it follows that
\begin{equation}\label{est3}
\Vert X_1(t)\Vert_{\R^{n+1}}^2 \leq C_3(D)\left( \Vert Z_1(t)\Vert_{\R^{n+1}}^2 +\int_{t-D}^t \Vert Z_1(s)\Vert_{\R^{n+1}}^2\, ds \right),
\end{equation}
with 
$$
C_3(D) = \max\left(2,2D e^{2D\Vert A_1\Vert} \Vert B_1\Vert_{\R^{n+1}}^2 \Vert K_1(D)\Vert_{\R^{n+1}}^2\right).
$$
We then infer from \eqref{lambdamin1} and \eqref{est3} that
\begin{equation}\label{lambdamin2}
\begin{split}
& \frac{M(D)}{2}Z_1(t)^\top P(D)Z_1(t) + \frac{M(D)}{2} \int_{t-D}^t Z_1(s)^\top P(D)Z_1(s)\, ds \\
\geq &\ M(D) \frac{\lambda_{\min}(P(D))}{2C_3(D)} \Vert X_1(t)\Vert_{\R^{n+1}}^2 ,
\end{split}
\end{equation}
for every $t\geq 0$.

Using \eqref{12:55} and the definition of $X_1$ in (\ref{eq:def:A1}), we have
\begin{equation}\label{13:00}
- \frac{1}{2}\sum_{j=1}^{+\infty} \lambda_jw_j(t)^2 \geq - \frac{1}{2}\sum_{j=n+1}^\infty \lambda_jw_j(t)^2 -  \frac{1}{2} \max_{1\leq j\leq n} ( \lambda_j ) \Vert X_1(t)\Vert_{\R^{n+1}}^2 ,
\end{equation}
and therefore, using \eqref{lambdamin2}, we get
$$
V_D(t) \geq \left( M(D) \frac{\lambda_{\min}(P(D))}{2C_3(D)} -  \frac{1}{2} \max_{1\leq j\leq n} ( \lambda_j ) \right) \Vert X_1(t)\Vert_{\R^{n+1}}^2  - \frac{1}{2}\sum_{j=n+1}^\infty \lambda_jw_j(t)^2 ,
$$
for every $t\geq 0$.
By definition of $M(D)$ (see \eqref{defM}), one has $M(D) \frac{\lambda_{\min}(P(D))}{2C_3(D)} -  \frac{1}{2} \max_{1\leq j\leq n} ( \lambda_j ) >0$ and hence there exists $C_4(D)>0$ such that
\begin{equation}\label{13:47}
V_D(t) \geq C_4(D) \left(  \Vert X_1(t)\Vert_{\R^{n+1}}^2  - \frac{1}{2}\sum_{j=n+1}^\infty \lambda_jw_j(t)^2 \right) .
\end{equation}

Using the series expansion $w(t,\cdot)=\sum_{i=1}^{+\infty} w_i(t)e_i(\cdot)$, we have
$$\Vert w(t)\Vert_{H^1_0(0,L)}^2=\sum_{(i,j)\in(\N^*)^2}w_i(t)w_j(t)\int_0^L e_i'(x) e_j'(x)\, dx.$$
By definition, one has $e_n''+ce_n=\lambda_ne_n$ and $e_n(0)=e_n(L)=0$, for every $n\in\N^*$. Integrating by parts and using the orthonormality property, we get
$$
\int_0^L e_i'(x) e_j'(x)\, dx =\int_0^L c(x)e_i(x)e_j(x) \, dx - \lambda_j\delta_{ij}  , 
$$
with $\delta_{ij}=1$ whenever $i=j$ and $\delta_{ij}=0$ otherwise, and thus, for all $t\geq 0$,
\begin{equation}\label{cp:1}
\Vert w(t)\Vert_{H^1_0(0,L)}^2=\int_0^L c(x)w(t,x)^2\,dx - \sum_{j=1}^\infty \lambda_j w_j(t)^2.
\end{equation}
Since $c\in L^\infty(0,L)$, it follows that
\begin{equation*}
\begin{split}
\Vert w(t)\Vert_{H^1_0(0,L)}^2 &\leq \Vert c\Vert_{L^\infty(0,L)}\ \Vert w(t)\Vert_{L^2(0,L)}^2 - \sum_{j=1}^n \lambda_j w_j(t)^2 - \sum_{j=n+1}^\infty \lambda_j w_j(t)^2 \\
&\leq \Vert c\Vert_{L^\infty(0,L)}\sum_{j=1}^\infty  w_j(t)^2  - \sum_{j=n+1}^\infty \lambda_j w_j(t)^2 \\
&\leq \Vert c\Vert_{L^\infty(0,L)}\Vert X_1(t)\Vert_{\R^{n+1}}^2  - \sum_{j=n+1}^\infty (\lambda_j- \Vert c\Vert_{L^\infty(0,L)}) w_j(t)^2 \\
\end{split}
\end{equation*}
and since $\lambda_j\rightarrow -\infty$ as $j$ tends to $+\infty$, there exists $C_5>0$ such that
$$
\Vert w(t)\Vert_{H^1_0(0,L)}^2 \leq - C_5 \left(  \Vert X_1(t)\Vert_{\R^{n+1}}^2  - \frac{1}{2}\sum_{j=n+1}^\infty \lambda_jw_j(t)^2 \right).
$$
Then \eqref{equiv2} follows from \eqref{13:47}.
\end{proof}

Using \eqref{ref_intervalle}, note that if $t<D$ then the integral term of \eqref{defVD} is equal to $0$ and $Z_1(t)=X_1(t)$, and hence
\begin{equation*}
V_D(t) = \frac{M(D)}{2}X_1(t)^\top P(D)X_1(t)  - \frac{1}{2}\sum_{j=1}^{+\infty} \lambda_jw_j(t)^2 ,
\end{equation*}
for every $t<D$. This remark leads to the following lemma.

\begin{lemma} \label{lemma2.5}
There exists $C_6(D)>0$ such that
\begin{equation}\label{equiv3}
V_D(t) \leq C_6(D) (u_D(t)^2+ \Vert w(t)\Vert_{H^1_0(0,L)}^2 ), 
\end{equation}
for every $t < D$.
\end{lemma}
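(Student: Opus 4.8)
The plan is to start from the simplified expression of $V_D(t)$ valid for $t<D$ that is recorded right before the statement: for such $t$ the delayed integral in \eqref{defVD} is over the empty set and $Z_1(t)=X_1(t)$ (because $\alpha$ vanishes on $(-\infty,D)$), so that
\begin{equation*}
V_D(t)=\frac{M(D)}{2}X_1(t)^\top P(D)X_1(t)-\frac{1}{2}\sum_{j=1}^{+\infty}\lambda_jw_j(t)^2 .
\end{equation*}
Everything then reduces to bounding these two terms from above by a multiple of $u_D(t)^2+\Vert w(t)\Vert_{H^1_0(0,L)}^2$. The single recurring tool will be the elementary chain
\begin{equation*}
\Vert X_1(t)\Vert_{\R^{n+1}}^2=u_D(t)^2+\sum_{j=1}^n w_j(t)^2\leq u_D(t)^2+\Vert w(t)\Vert_{L^2(0,L)}^2\leq u_D(t)^2+\tfrac{L^2}{\pi^2}\Vert w(t)\Vert_{H^1_0(0,L)}^2 ,
\end{equation*}
where the first inequality uses $\sum_{j=1}^n w_j(t)^2\leq\sum_{j=1}^{+\infty}w_j(t)^2=\Vert w(t)\Vert_{L^2(0,L)}^2$ and the last one is the Poincaré inequality, legitimate since $w(t,\cdot)\in H^1_0(0,L)$.

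For the quadratic form, I would just write $X_1^\top P(D)X_1\leq\lambda_{\max}(P(D))\Vert X_1\Vert_{\R^{n+1}}^2$ and invoke the chain above. For the series, I would split it exactly as in \eqref{12:55}: the finite part $-\tfrac12\sum_{j=1}^n\lambda_jw_j(t)^2$ is nonpositive because $\lambda_j\geq0$ for $j\leq n$, hence it is harmless for an upper bound and can be discarded; the tail $-\tfrac12\sum_{j=n+1}^\infty\lambda_jw_j(t)^2$ is controlled through identity \eqref{cp:1}, which gives $-\sum_{j=1}^\infty\lambda_jw_j(t)^2=\Vert w(t)\Vert_{H^1_0(0,L)}^2-\int_0^L c(x)w(t,x)^2\,dx$, together with $0\leq\sum_{j=1}^n\lambda_jw_j(t)^2\leq\big(\max_{1\leq j\leq n}\lambda_j\big)\Vert X_1(t)\Vert_{\R^{n+1}}^2$ and $\big|\int_0^L c\,w^2\big|\leq\Vert c\Vert_{L^\infty(0,L)}\Vert w(t)\Vert_{L^2(0,L)}^2$. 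Using once more the Poincaré bound and the chain above to absorb $\Vert X_1(t)\Vert_{\R^{n+1}}^2$ and $\Vert w(t)\Vert_{L^2(0,L)}^2$, one obtains \eqref{equiv3} with an explicit $C_6(D)$ depending only on $\lambda_{\max}(P(D))$, $M(D)$, $L$, $\Vert c\Vert_{L^\infty(0,L)}$ and $\lambda_1$.

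I do not anticipate any genuine difficulty here: this is just the continuity estimate dual to the coercivity Lemma~\ref{lemma2.4}, and every inequality used is elementary (Cauchy--Schwarz/Poincaré and the sign pattern $\lambda_j\geq0$ for $j\leq n$, $\lambda_j<0$ for $j>n$). The only two points deserving a line of care are that the formula for $V_D(t)$ employed is specifically the one valid for $t<D$ — so that neither the delayed integral term nor the discrepancy between $Z_1$ and $X_1$ enters — and that the rearrangement of the series in \eqref{cp:1} into the finitely many modes with $\lambda_j\geq0$ plus an absolutely convergent negative tail is justified because $w(t,\cdot)\in H^2(0,L)\cap H^1_0(0,L)$ ensures $\sum_{j\geq1}|\lambda_j|\,w_j(t)^2<\infty$.
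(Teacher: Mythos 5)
Your proof is correct and takes essentially the same route as the paper: for $t<D$ the delayed integral vanishes and $Z_1(t)=X_1(t)$, and the series term is controlled through identity \eqref{cp:1} together with the Poincar\'e inequality. The only difference is that you make explicit the bound on the finite-dimensional quadratic term via $\lambda_{\max}(P(D))$ and $\Vert X_1(t)\Vert_{\R^{n+1}}^2\leq u_D(t)^2+\Vert w(t)\Vert_{L^2(0,L)}^2$, a step the paper leaves implicit.
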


\begin{proof}
Using \eqref{cp:1}, one has
$$ 
- \sum_{j=1}^{+\infty} \lambda_jw_j(t)^2
\leq \Vert w(t)\Vert_{H^1_0(0,L)}^2 + \Vert c\Vert_{L^\infty(0,L)} \ \Vert w(t)\Vert_{L^2(0,L)}^2 
\leq  C_8(D)\Vert w(t)\Vert_{H^1_0(0,L)}^2,
$$
and then the lemma follows from the Poincar\'e inequality $\Vert w(t)\Vert_{L^2(0,L)}^2 \leq L \Vert w(t)\Vert_{H^1_0(0,L)}^2$.
\end{proof}

\begin{lemma}\label{lem5}
The functional $V_D$ decreases exponentially to $0$.
\end{lemma}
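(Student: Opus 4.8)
The plan is to establish a differential inequality $\dot V_D(t)\le -\gamma(D)\,V_D(t)$, with $\gamma(D)>0$, for all $t\ge 2D$. On $[0,2D]$ one checks from \eqref{defalpha} that $u_D\equiv 0$ and $\alpha_D\equiv 0$, so the infinite-dimensional component of \eqref{eq444} evolves by the free semigroup $e^{tA}$ while the finite-dimensional one solves a linear ODE; in particular $V_D$ is finite and continuous on $[0,2D]$. Together with $V_D\ge 0$ (Lemma \ref{lemma2.4}), the inequality then yields $V_D(t)\le V_D(2D)\,e^{-\gamma(D)(t-2D)}$ for $t\ge 2D$, which is the claim.

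Fix $t>2D$, so that $(t-D,t)\cap(D,+\infty)=(t-D,t)$ and, by \eqref{defalpha}, $\alpha_D(t)=K_1(D)Z_1(t-D)$. Differentiating \eqref{defVD}: by Remark \ref{rem_Lyap1} the first term contributes $-M(D)\|Z_1(t)\|_{\R^{n+1}}^2$, while the integral term contributes $M(D)\bigl(V_1(t)-V_1(t-D)\bigr)$; since $V_1$ decreases with rate $C_1(D)$, one has $V_1(t)\le e^{-C_1(D)D}V_1(t-D)$, and this contribution is therefore at most the \emph{strictly negative} quantity $-M(D)\bigl(1-e^{-C_1(D)D}\bigr)V_1(t-D)$ --- this is the mechanism by which the integral term produces the negativity needed to dominate the delayed forcing $Z_1(t-D)$. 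For the third term, the self-adjointness of $A$ and \eqref{newzero} (with $u_D'=\alpha_D$) give
\[
\frac{d}{dt}\Bigl(-\tfrac12\langle w(t),Aw(t)\rangle\Bigr)=-\|Aw(t)\|_{L^2}^2-u_D(t)\langle a,Aw(t)\rangle-\alpha_D(t)\langle b,Aw(t)\rangle,
\]
and Young's inequality absorbs the two cross terms into $\tfrac12\|Aw(t)\|_{L^2}^2$, leaving $-\tfrac12\|Aw(t)\|_{L^2}^2+\|a\|_{L^2}^2u_D(t)^2+\|b\|_{L^2}^2\alpha_D(t)^2$.

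Next I would reinject the finite-dimensional quantities. Since $u_D(t)^2\le\|X_1(t)\|_{\R^{n+1}}^2$, the estimate \eqref{est3} together with the identity $\int_{t-D}^t\|Z_1(s)\|^2\,ds=V_1(t-D)-V_1(t)\le V_1(t-D)$ (valid for $t\ge 2D$) bounds $u_D(t)^2$ by a fixed multiple of $\|Z_1(t)\|^2+V_1(t-D)$, while $\alpha_D(t)^2=|K_1(D)Z_1(t-D)|^2$ is a fixed multiple of $V_1(t-D)$ by \eqref{defV1}; moreover $-\tfrac12\|Aw(t)\|_{L^2}^2\le-\eta\bigl(-\tfrac12\sum_{j>n}\lambda_jw_j(t)^2\bigr)$, because $\lambda_j^2\ge-\eta\lambda_j$ for $j>n$ by \eqref{refeta}. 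Collecting, the coefficient of $\|Z_1(t)\|^2$ in $\dot V_D(t)$ is $\|a\|_{L^2}^2C_3(D)-M(D)$, and that of $V_1(t-D)$ gathers the $\|a\|^2$- and $\|b\|^2$-contributions with $-M(D)\bigl(1-e^{-C_1(D)D}\bigr)$; choosing $M(D)$ large enough, in the spirit of \eqref{defM}, makes both negative, so that
\[
\dot V_D(t)\le -c_1(D)\,\|Z_1(t)\|_{\R^{n+1}}^2-c_2(D)\,V_1(t-D)-\eta\Bigl(-\tfrac12\sum_{j>n}\lambda_jw_j(t)^2\Bigr)
\]
with $c_1(D),c_2(D)>0$. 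On the other hand, dropping the nonpositive part $-\tfrac12\sum_{j\le n}\lambda_jw_j(t)^2$ of the last term of \eqref{defVD} and using $V_1(s)\le V_1(t-D)$ for $s\in(t-D,t)$ gives $V_D(t)\le C(D)\bigl(\|Z_1(t)\|^2+V_1(t-D)+(-\tfrac12\sum_{j>n}\lambda_jw_j(t)^2)\bigr)$; comparing the last two displays yields $\dot V_D(t)\le -\gamma(D)V_D(t)$, as desired.

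I expect the main obstacle to be this last piece of bookkeeping: the single weight $M(D)$ must be simultaneously large enough for the positivity and norm-equivalence of $V_D$ (Lemmas \ref{lemma2.4}--\ref{lemma2.5}) and for the strictly negative endpoint term $-M(D)(1-e^{-C_1(D)D})V_1(t-D)$ of the integral to absorb both the delayed forcing $\|b\|^2\alpha_D(t)^2$ and the $\int_{t-D}^t\|Z_1(s)\|^2\,ds$ produced by \eqref{est3}; one must also check that it is $\|Aw\|_{L^2}^2$, and not merely $\langle w,Aw\rangle$ (which is distorted by the reaction term $\int_0^L c\,w^2$), that genuinely dominates the positive tail $-\tfrac12\sum_{j>n}\lambda_jw_j^2$ of $V_D$. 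A subsidiary, more routine point is the justification of differentiating $V_D$ along the trajectory, which rests on parabolic smoothing for \eqref{eq444} and on $u_D$ being smooth by construction.
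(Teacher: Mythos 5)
Your proof is correct and follows essentially the same route as the paper: differentiate $V_D$ for $t>2D$, use the Lyapunov identity for the $Z_1$-part and the delay-integral term, absorb the cross terms $\langle Aw,a\rangle u_D$ and $\langle Aw,b\rangle\alpha_D$ by Young's inequality together with \eqref{est3}, use the spectral gap ($\lambda_j^2\geq -\eta\lambda_j$ for $j>n$) so that $-\tfrac12\Vert Aw\Vert_{L^2}^2$ dominates the tail of $V_D$, and take $M(D)$ large enough to close a Gronwall-type inequality. The only difference is bookkeeping: you keep the delayed value $\alpha_D(t)=K_1(D)Z_1(t-D)$ and absorb it through the strictly negative increment $-M(D)\bigl(1-e^{-C_1(D)D}\bigr)V_1(t-D)$ of the integral term (which may force $M(D)$ somewhat larger than \eqref{defM}, harmless since the paper only requires $M(D)$ large enough), whereas the paper rewrites that increment exactly as $-M(D)\int_{t-D}^t\Vert Z_1(s)\Vert_{\R^{n+1}}^2\,ds$ and states the cross term in \eqref{diffV1} with $Z_1(t)$; your treatment is, if anything, the more careful one on this point.
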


\begin{proof}
Let us compute $V_D'(t)$ for $t>2D$ and state a differential inequality satisfied by $V_D$. 
First of all, it follows from \eqref{poleshifting} (in Corollary \ref{corkalman}) that
$$
\frac{d}{dt} \frac{M(D)}{2}Z_1(t)^\top P(D)Z_1(t) = -M(D) \Vert Z_1(t)\Vert_{\R^{n+1}}^2,
$$
and thus
\begin{equation*}
\begin{split}
\frac{d}{dt} \frac{M(D)}{2}\int_{t-D}^t Z_1(s)^\top P(D)Z_1(s)\, ds 
%&= \frac{d}{dt} \frac{M(D)}{2}\int_{-D}^0 Z_1(s+t)^\top P(D)Z_1(s+t)\, ds \\
%&= -M(D) \int_{-D}^0 \Vert Z_1(s+t)\Vert_{\R^{n+1}}^2 \, ds \\
&= -M(D) \int_{t-D}^t \Vert Z_1(s)\Vert_{\R^{n+1}}^2 \, ds .
\end{split}
\end{equation*}
Then, using \eqref{eq444}, \eqref{defVD} and the fact that $A$ is self-adjoint, we get
\begin{equation}\label{diffV1}
\begin{split}
V_D'(t) =& -M(D) \Vert Z_1(t)\Vert_{\R^{n+1}}^2  - M(D) \int_{t-D}^t \Vert Z_1(s)\Vert_{\R^{n+1}}^2 \, ds \\
& -  \Vert Aw(t)\Vert_{L^2(0,L)}^2 - \langle Aw(t),a\rangle_{L^2(0,L)}u_D(t) - \langle Aw(t),b\rangle_{L^2(0,L)}K_1(D)Z_1(t) ,
\end{split}
\end{equation}
for every $t>2D$. From Young's inequality, we derive the estimates
\begin{equation}\label{est1}
\left\vert\langle Aw(t),a\rangle_{L^2(0,L)}u_D(t)\right\vert \leq \frac{1}{4}\Vert Aw(t)\Vert^2_{L^2(0,L)}+ \Vert a\Vert^2_{L^2(0,L)} \Vert X_1(t)\Vert^2_{\R^{n+1}} ,
\end{equation}
and
\begin{equation}\label{est2}
\left\vert\langle Aw(t),b\rangle_{L^2(0,L)}K_1(D)Z_1(t)\right\vert \leq \frac{1}{4}\Vert Aw(t)\Vert^2_{L^2(0,L)} + \Vert b \Vert_{L^2(0,L)}^2 \Vert K_1(D)\Vert_{\R^{n+1}}^2 \Vert Z_1(t)\Vert^2_{\R^{n+1}}.
\end{equation}
With the estimates \eqref{est1}, \eqref{est2} and \eqref{est3}, we infer from \eqref{est3} and from \eqref{diffV1} that
\begin{equation*}
\begin{split}
V_D'(t) \leq& - \left( M(D)-\Vert b \Vert_{L^2(0,L)}^2 \Vert K_1(D)\Vert_{\R^{n+1}}^2  - \Vert a\Vert^2_{L^2(0,L)} C_3(D) \right) \Vert Z_1(t)\Vert_{\R^{n+1}}^2  \\
& - \left( M(D) - \Vert a\Vert^2_{L^2(0,L)} C_3(D) \right) \int_{t-D}^t \Vert Z_1(s)\Vert_{\R^{n+1}}^2 \, ds  - \frac{1}{2} \Vert Aw(t)\Vert_{L^2(0,L)}^2 .
\end{split}
\end{equation*}
From \eqref{defM}, the real number $M(D)$ has been chose large enough so that 
$$
M(D)-\Vert b \Vert_{L^2(0,L)}^2 \Vert K_1(D)\Vert_{\R^{n+1}}^2  - \Vert a\Vert^2_{L^2(0,L)} C_3(D) >0
$$
and
$$
M(D) - \Vert a\Vert^2_{L^2(0,L)} C_3(D) >0.
$$
Therefore, there exists $C_7(D)>0$ such that
\begin{equation}\label{14:14}
V_D'(t) \leq -C_7(D) \left( \Vert Z_1(t)\Vert_{\R^{n+1}}^2 + \int_{t-D}^t \Vert Z_1(s)\Vert_{\R^{n+1}}^2 \, ds    \right)  - \frac{1}{2} \Vert Aw(t)\Vert_{L^2(0,L)}^2 .
\end{equation}

Let us provide an estimate of $\Vert Aw(t)\Vert_{L^2(0,L)}^2$. Since $-\lambda_j\leq\lambda_j^2$ as $j$ tends to $+\infty$, it follows that there exists $C_8>0$ such that
\begin{equation*}
\begin{split}
-\frac{1}{2} \langle w(t),Aw(t)\rangle_{L^2(0,L)} 
&= -\frac{1}{2}\sum_{j=1}^{n} \lambda_j w_j(t)^2 -\frac{1}{2}\sum_{j=n}^{+\infty} \lambda_j w_j(t)^2 \\
&\leq -\frac{1}{2}\sum_{j=n}^{+\infty} \lambda_j w_j(t)^2 \\
&\leq \frac{1}{2C_8} \sum_{j=1}^{+\infty} \lambda_j^2 w_j(t)^2 =  \frac{1}{2C_8} \Vert Aw\Vert_{L^2(0,L)}^2 .
\end{split}
\end{equation*}
Hence it follows from \eqref{14:14} that 
$$
V_D'(t) \leq -C_7(D) \left( \Vert Z_1(t)\Vert_{\R^{n+1}}^2 + \int_{t-D}^t \Vert Z_1(s)\Vert_{\R^{n+1}}^2 \, ds    \right)  - \frac{C_8}{2}  \langle w(t),Aw(t)\rangle_{L^2(0,L)}  .
$$
Finally, using \eqref{lambdamin1}, there exists $C_9(D)>0$ such that
$$
V_D'(t) \leq -C_9(D) V_D(t),
$$
for every $t>2D$. Therefore $V_D(t)$ decreases exponentially to $0$.
\end{proof}

From Lemma \ref{lem5}, $V_D(t)$ decreases exponentially to $0$. It follows from Lemmas \ref{lemma2.4} and \ref{lemma2.5} that there exists $C_{10}(D)>0$ and $\mu>0$ such that 
$$
u_D(t) ^2 + \Vert w(t)\Vert_{H^1_0(0,L)}^2 \leq 
C_{10}(D) e^{-\mu t} (u_D(0) ^2 + \Vert w(0)\Vert_{H^1_0(0,L)}^2 )
$$
for every $t\geq 0$. Using \eqref{defw} the proof of Theorem \ref{thm1} follows.

%%%%%%%%%%%%%%%%%%%%%%%%%%%%%%%%%%%%%%%%%%%%%%%%%%
%%%%%%%%%%%%%%%%%%%%%%%%%%%%%%%%%%%%%%%%%%%%%%%%%%

\end{document}